\journal{Applied Mathematics and Computation}
\begin{document}
	\newtheorem{lem}{Lemma}
	\newtheorem{thm}{Theorem}
	\newtheorem{ex}{Example}
	\begin{frontmatter}
		
		%% Title, authors and addresses
		
		%% use the tnoteref command within \title for footnotes;
		%% use the tnotetext command for theassociated footnote;
		%% use the fnref command within \author or \address for footnotes;
		%% use the fntext command for theassociated footnote;
		%% use the corref command within \author for corresponding author footnotes;
		%% use the cortext command for theassociated footnote;
		%% use the ead command for the email address,
		%% and the form \ead[url] for the home page:
		%% \title{Title\tnoteref{label1}}
		%% \tnotetext[label1]{}
		%% \author{Name\corref{cor1}\fnref{label2}}
		%% \ead{email address}
		%% \ead[url]{home page}
		%% \fntext[label2]{}
		%% \cortext[cor1]{}
		%% \affiliation{organization={},
		%%             addressline={},
		%%             city={},
		%%             postcode={},
		%%             state={},
		%%             country={}}
		%% \fntext[label3]{}
		
		\title{Two  new lower bounds for the smallest singular value}
		
		%% use optional labels to link authors explicitly to addresses:
		%% \author[label1,label2]{}
		%% \affiliation[label1]{organization={},
		%%             addressline={},
		%%             city={},
		%%             postcode={},
		%%             state={},
		%%             country={}}
		%%
		%% \affiliation[label2]{organization={},
		%%             addressline={},
		%%             city={},
		%%             postcode={},
		%%             state={},
		%%             country={}}
		
		\author[inst1]{Shun Xu\footnote{Email: xushun@mail.ustc.edu.cn}}
		
		\affiliation[inst1]{organization={School of Mathematical Sciences},
			addressline={University of Science and Technology of China}, 
			city={Hefei},
			postcode={230026}, 
			%state={State One},
			country={China}}

		\begin{abstract}
			%% Text of abstract
			In this paper, we obtain two new lower bounds for the smallest singular value of nonsingular matrices which is better than the bound presented by Zou \cite{zou2012lower}, Lin  and Xie  \cite{lin2021some} under certain circumstances.
		\end{abstract}

		\begin{keyword}
			%% keywords here, in the form: keyword \sep keyword
			Singular values\sep Frobenius norm\sep determinant.
			%% PACS codes here, in the form: \PACS code \sep code
		\end{keyword}
		
	\end{frontmatter}
	
	%% \linenumbers
	
	%% main text
	
	\section{Introduction}
	Let $M_{n}(n\geqslant2)$ be the space of $n \times n$ complex matrices. Let $\sigma_{i}$ $(i=1, \cdots, n)$ be the singular values of $A \in M_{n}$ which is nonsingular and suppose that $\sigma_{1} \geqslant \sigma_{2} \geqslant \cdots \geqslant \sigma_{n-1} \geqslant \sigma_{n} > 0$. For
	$A=\left[a_{i j}\right] \in M_{n}$, the Frobenius norm of $A$ is defined by
	$$
	\|A\|_{F}=\left(\sum_{i, j=1}^{n}\left|a_{i j}\right|^{2}\right)^{1 / 2}={\rm tr}\left( A^{H} A\right)^{\frac{1}{2}}
	$$
	where $A^{H}$ is the conjugate transpose of $A$. The relationship between the Frobenius norm and singular values is
	$$
	\|A\|_{F}^{2}=\sigma_{1}^{2}+\sigma_{2}^{2}+\cdots+\sigma_{n}^{2}
	$$
	It is well known that lower bounds for the smallest singular value $\sigma_{n}$ of a nonsingular matrix $A \in M_{n}$ have many potential theoretical and practical applications \cite{1985Matrix,horn1994topics}. Yu and Gu \cite{yi1997note} obtained a lower bound for $\sigma_{n}$ as follows:
	$$
	\sigma_{n} \geqslant |\det A| \cdot\left(\frac{n-1}{\|A\|_{F}^{2}}\right)^{(n-1) / 2}=l>0
	$$
	The above inequality is also shown in \cite{piazza2002upper}.  In \cite{zou2012lower}, Zou improved the above inequality by showing that
	$$
	\sigma_{n } \geqslant|{\det} A|\left(\frac{n-1}{\|A\|_{F}^{2}-l^2}\right)^{(n-1) / 2}=l_0
	$$
	In \cite{lin2021some}, Lin, Minghua and Xie, Mengyan improve a lower bound for
	smallest singular value of matrices by showing that $a$ is the smallest positive solution to the equation
	$$
	x^{2}\left(\|A\|_{F}^{2}-x^{2}\right)^{n-1}=|{\det} A|^{2}(n-1)^{n-1} .
	$$
	and $\sigma\geqslant a>l_0$.
	
	In this paper, we obtain  two new lower bounds for the smallest singular value of nonsingular matrices. We give some numerical examples which will show that our result is better than $l_0$ and $a$ under certain circumstances.
	\section{Main results}
	\begin{lem}\label{lem1}
		Let\[
		l_0=|{\det} A|\left(\frac{n-1}{\|A\|_{F}^{2}-l^2}\right)^{(n-1) / 2}
		\]
		then $\sigma_n>l_0$.
	\end{lem}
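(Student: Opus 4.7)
The plan is to follow the same AM-GM route used in Yu--Gu's original bound, but instead of throwing away the $\sigma_n^2$ inside the denominator, keep it and then replace it using a bound that is already available.

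First I would start from the identity
\[
|\det A|^{2}=\sigma_{n}^{2}\prod_{i=1}^{n-1}\sigma_{i}^{2}
\]
and apply the AM--GM inequality to the $n-1$ factors $\sigma_{1}^{2},\dots,\sigma_{n-1}^{2}$, using
\[
\sum_{i=1}^{n-1}\sigma_{i}^{2}=\|A\|_{F}^{2}-\sigma_{n}^{2}.
\]
This produces the sharpened inequality
\[
\sigma_{n}\;\geqslant\;|\det A|\left(\frac{n-1}{\|A\|_{F}^{2}-\sigma_{n}^{2}}\right)^{(n-1)/2},
\]
which is implicit in $\sigma_{n}$. The next step is the key point: since Yu--Gu's bound already gives $\sigma_{n}\geqslant l$, we get $\sigma_{n}^{2}\geqslant l^{2}$, hence $\|A\|_{F}^{2}-\sigma_{n}^{2}\leqslant \|A\|_{F}^{2}-l^{2}$, and substituting this into the denominator yields $\sigma_{n}\geqslant l_{0}$.

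The only part that requires care is the strictness $\sigma_{n}>l_{0}$. For this I would argue that $\sigma_{n}>l$ strictly whenever $A$ is nonsingular: equality in the AM--GM step used to derive $l$ requires $\sigma_{1}=\sigma_{2}=\cdots=\sigma_{n-1}$, and a direct calculation under this assumption shows that $l=\sigma_{n}$ forces $\sigma_{n}=0$, contradicting nonsingularity. Therefore $\sigma_{n}^{2}>l^{2}$, the denominator replacement above is strict, and we conclude $\sigma_{n}>l_{0}$.

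I expect the main (very minor) obstacle to be pinning down the strict inequality, since the substitution itself and the AM--GM manipulation are routine; establishing that $\sigma_{n}>l$ rather than just $\sigma_{n}\geqslant l$ is where one must invoke the nonsingularity hypothesis explicitly.
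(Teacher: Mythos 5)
Your proof is correct and its overall structure matches the paper's: both start from the implicit inequality
\[
\sigma_n \;\geqslant\; |\det A|\left(\frac{n-1}{\|A\|_F^2-\sigma_n^2}\right)^{(n-1)/2}
\]
(which the paper cites from Zou, and you rederive via AM--GM) and then bound the denominator by a previously known lower bound on $\sigma_n$. The only genuine difference is where the strictness comes from. The paper substitutes $\sigma_n\geqslant l_0$ (quoted from Zou) into the denominator and obtains strictness from $l_0>l$, so it relies on two facts from Zou: $\sigma_n\geqslant l_0$ and $l_0>l$. You instead substitute the weaker Yu--Gu bound $\sigma_n\geqslant l$ and obtain strictness directly by showing $\sigma_n>l$ whenever $A$ is nonsingular; your equality-case analysis is a little more elaborate than needed (the simpler observation is that $\|A\|_F^2-\sigma_n^2<\|A\|_F^2$ is automatically strict because $\sigma_n>0$, so $\sigma_n>l$ follows in one line without discussing the AM--GM equality case), but it is correct. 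Your route is more self-contained, since it does not presuppose Zou's refinement $\sigma_n\geqslant l_0$, and makes transparent that the strictness in the conclusion is an automatic consequence of nonsingularity rather than of the particular refinement chain $l<l_0\leqslant\sigma_n$.
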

	
	\begin{proof}
		In \cite{zou2012lower}, we have
		\[
		\sigma_{n } \geqslant|{\det} A|\left(\frac{n-1}{\|A\|_{F}^{2}-\sigma_n^2}\right)^{(n-1) / 2}
		\]
		since $\sigma_n\geqslant l_0>l$, thus
		\[
		\begin{aligned}
			\sigma&\geqslant|{\det} A|\left(\frac{n-1}{\|A\|_{F}^{2}-\sigma_n^2}\right)^{(n-1) / 2}\\&\geqslant|{\det} A|\left(\frac{n-1}{\|A\|_{F}^{2}-l_0^2}\right)^{(n-1) / 2}\\&>|{\det} A|\left(\frac{n-1}{\|A\|_{F}^{2}-l^2}\right)^{(n-1) / 2}=l_0
		\end{aligned}
		\]
		so $\sigma_n>l_0$.
	\end{proof}
	
	\begin{thm}
		Let $A \in M_{n}$ be nonsingular. Then
		\[
		\left(l_0^2+|\det(l_0^2 I_n-A^HA)|\left(\frac{n-1}{\|A\|_F^2-n l_0^2}\right)^{n-1}\right)^{1/2}=l_1
		\]
		then $\sigma_n\geqslant l_1$,
		where 
		\[
		l=\left|\det A\right|\left(\frac{n-1}{\|A\|_{F}^{2}}\right)^{\frac{n{-1}}{2}}
		,
		l_0=\left|\det A\right|\left(\frac{n-1}{\|A\|_{F}^{2}-l^2}\right)^{\frac{n{-1}}{2}}
		\]
	\end{thm}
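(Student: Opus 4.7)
The plan is to mirror the Yu--Gu / Zou strategy, but applied to the shifted Hermitian matrix $C := A^H A - l_0^2 I_n$ rather than to $A$ itself. Lemma~\ref{lem1} guarantees $\sigma_n > l_0$, so every eigenvalue $\mu_i := \sigma_i^2 - l_0^2$ of $C$ is strictly positive; hence $C$ is Hermitian positive definite and its singular values coincide with its eigenvalues. From this one reads off
\[
\operatorname{tr}(C) = \|A\|_F^2 - n l_0^2, \qquad \det C = \prod_{i=1}^{n}(\sigma_i^2 - l_0^2) = |\det(l_0^2 I_n - A^H A)|,
\]
both positive (each $\sigma_i \geq \sigma_n > l_0$, so $\operatorname{tr}(C) > 0$), which also shows the expression defining $l_1$ is well posed.

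Next I would apply the arithmetic--geometric mean inequality to the eigenvalues of $C$ in the usual Yu--Gu style. Ordering $\mu_1 \geq \cdots \geq \mu_n > 0$ and using $\mu_n \geq 0$ in the second step,
\[
\prod_{i=1}^{n-1} \mu_i \leq \left(\frac{\mu_1 + \cdots + \mu_{n-1}}{n-1}\right)^{n-1} \leq \left(\frac{\operatorname{tr}(C)}{n-1}\right)^{n-1}.
\]
Dividing the identity $\det C = \mu_n \prod_{i=1}^{n-1}\mu_i$ by the right-hand side then gives
\[
\mu_n \geq \det C \left(\frac{n-1}{\operatorname{tr}(C)}\right)^{n-1}.
\]

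Finally, substituting $\mu_n = \sigma_n^2 - l_0^2$ together with the trace and determinant identities above and taking square roots yields exactly $\sigma_n \geq l_1$. The main conceptual step is spotting the correct auxiliary matrix: once one shifts $A^HA$ by $l_0^2 I_n$, the positivity of $C$ provided by Lemma~\ref{lem1} legitimizes the entire AM--GM argument and the rest is bookkeeping. I do not expect a real obstacle; the only place one must be a little careful is the sign identity $\det(l_0^2 I_n - A^HA) = (-1)^n \det(A^HA - l_0^2 I_n)$, which the absolute value in the theorem already absorbs.
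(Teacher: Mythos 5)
Your argument is correct and takes essentially the same route as the paper: shifting $A^{H}A$ by $l_0^2 I_n$ and applying AM--GM to the $n-1$ largest eigenvalues of $C = A^{H}A - l_0^2 I_n$ is precisely what the paper does when it sets $\lambda = l_0^2$ in the inequality $\bigl|\prod_{i<n}(\lambda-\sigma_i^2)\bigr| \leqslant \bigl(\tfrac{\sum_{i<n}\sigma_i^2 - (n-1)\lambda}{n-1}\bigr)^{n-1}$. The only difference is cosmetic: you absorb $\sigma_n^2 \geqslant l_0^2$ directly into the trace bound $\sum_{i<n}\mu_i \leqslant \operatorname{tr}(C)$, whereas the paper first records the sharper intermediate inequality with denominator $\|A\|_F^2 - \sigma_n^2 - (n-1)l_0^2$, which it then reuses to drive the iteration in Theorems~\ref{thm2} and~\ref{thm3}.
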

	
	\begin{proof}
		Let $0<\lambda<\sigma_n^2$, denote
		\[
		\left|\left(\lambda-\sigma_{1}^{2}\right)\left(\lambda-\sigma_{2}^{2}\right) \cdots\left(\lambda-\sigma_{n-1}^2\right)\right|\leqslant\left(\frac{\sigma_1^2 +\cdots+\sigma_{n-1}^{2}-(n-1) \lambda}{n-1}\right)^{n-1}
		\]
		Since
		\begin{equation*}
			\begin{aligned}
				\left|\left(\lambda-\sigma_{1}^{2}\right)\left(\lambda-\sigma_{2}^{2}\right) \cdots\left(\lambda-\sigma_{n-1}^2\right)\right|
				&=\frac{\left|\left(\lambda-\sigma_{1}^{2}\right)\left(\lambda-\sigma_{2}^{2}\right) \cdots\left(\lambda-\sigma_{n}^2\right)\right|}{\sigma_n^2-\lambda}\\
				&=\frac{|\det(\lambda I_n-A^HA)|}{\sigma_n^2-\lambda}
			\end{aligned}
		\end{equation*}
		then
		\[
		\frac{|\det(\lambda I_n-A^HA)|}{\sigma_n^2-\lambda}\leqslant\left(\frac{\sigma_1^2 +\cdots+\sigma_{n-1}^{2}-(n-1) \lambda}{n-1}\right)^{n-1}
		\]
		\[
		\sigma_n^2\geqslant\lambda+|\det(\lambda I_n-A^HA)|\left(\frac{n-1}{\sigma_1^2 +\cdots+\sigma_{n-1}^{2}-(n-1) \lambda}\right)^{n-1}
		\]
		\[
		\sigma_n\geqslant \left(\lambda+|\det(\lambda I_n-A^HA)|\left(\frac{n-1}{\sigma_1^2 +\cdots+\sigma_{n-1}^{2}-(n-1) \lambda}\right)^{n-1}\right)^{1/2}
		\]
		By Lemma \ref{lem1}, $l_0<\sigma_n$, $l_0^2< \sigma_n^2$, let $\lambda=l_0^2$, then
		\begin{equation}\label{sigma}
			\sigma_n\geqslant \left(l_0^2+|\det(l_0^2 I_n-A^HA)|\left(\frac{n-1}{\|A\|_F^2-\sigma_n^2-(n-1) l_0^2}\right)^{n-1}\right)^{1/2}
		\end{equation}
		Therefore
		\[
		\sigma_n\geqslant \left(l_0^2+|\det(l_0^2 I_n-A^HA)|\left(\frac{n-1}{\|A\|_F^2-n l_0^2}\right)^{n-1}\right)^{1/2}
		\]
	\end{proof}

	\begin{thm}\label{thm2}
		Let $A \in M_{n}$ be nonsingular. Let
		\[
		b_{k+1}= \left(l_0^2+|\det(l_0^2 I_n-A^HA)|\left(\frac{n-1}{\|A\|_F^2-(n-1) l_0^2-b_{k}^2}\right)^{n-1}\right)^{1/2},k=1,2,\cdots
		\]
		with $
		l=\left|\det A\right|\left(\frac{n-1}{\|A\|_{F}^{2}}\right)^{\frac{n{-1}}{2}}
		,
		l_0=\left|\det A\right|\left(\frac{n-1}{\|A\|_{F}^{2}-l^2}\right)^{\frac{n{-1}}{2}}
		$
		\[
		b_1=\left(l_0^2+|\det(l_0^2 I_n-A^HA)|\left(\frac{n-1}{\|A\|_F^2-(n-1) l_0^2}\right)^{n-1}\right)^{1/2}
		\]
		then $0<b_k<b_{k+1}\leqslant \sigma_n,k=1,2,\cdots$, $\lim_{k\to\infty}b_k$ exists.
	\end{thm}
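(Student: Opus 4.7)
The plan is to recast the recursion as fixed-point iteration for the map
\[
f(x) = \left(l_0^2+|\det(l_0^2 I_n-A^HA)|\left(\frac{n-1}{\|A\|_F^2-(n-1) l_0^2-x^2}\right)^{n-1}\right)^{1/2},
\]
defined on the interval $[0,\sqrt{\|A\|_F^2-(n-1)l_0^2})$. With this notation the definitions collapse to $b_1=f(0)$ and $b_{k+1}=f(b_k)$, and the inequality (\ref{sigma}) established in the previous theorem says exactly that $\sigma_n$ is a super-fixed point, $\sigma_n\ge f(\sigma_n)$.

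First I would observe that $f$ is strictly increasing on its domain: as $x$ grows, $x^2$ grows, the denominator $\|A\|_F^2-(n-1)l_0^2-x^2$ shrinks, and since $|\det(l_0^2 I_n-A^HA)|\ge 0$ (indeed strictly positive unless $l_0=\sigma_n$, which is excluded by Lemma \ref{lem1}), the whole expression for $f(x)$ strictly increases. Combined with $\sigma_n\ge f(\sigma_n)$ this immediately yields $b_1=f(0)<f(\sigma_n)\le\sigma_n$, so in particular $0<b_1<\sigma_n$. The proof then proceeds by induction on $k$: assuming $0<b_{k-1}<b_k<\sigma_n$, the point $b_k$ still lies in the domain of $f$ (see the next paragraph), so monotonicity gives both $b_{k+1}=f(b_k)>f(b_{k-1})=b_k$ and $b_{k+1}=f(b_k)<f(\sigma_n)\le\sigma_n$. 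Chaining this yields $0<b_k<b_{k+1}\le\sigma_n$ for every $k$, after which the monotone convergence theorem for bounded increasing real sequences produces the limit.

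The main obstacle I anticipate is the bookkeeping to keep the iteration inside the domain of $f$, i.e.\ ensuring $\|A\|_F^2-(n-1)l_0^2-b_k^2>0$ at every step. The crucial slack comes from the strict inequality $l_0<\sigma_n$ furnished by Lemma \ref{lem1}, together with the identity $\|A\|_F^2=\sum_{i=1}^n\sigma_i^2\ge n\sigma_n^2$: combining them and using the inductive bound $b_k\le\sigma_n$ gives
\[
b_k^2+(n-1)l_0^2 \;\le\; \sigma_n^2+(n-1)l_0^2 \;<\; \sigma_n^2+(n-1)\sigma_n^2 \;=\; n\sigma_n^2 \;\le\; \|A\|_F^2,
\]
which is precisely what is needed to guarantee that $f(b_k)$ is well defined and strictly less than $\sigma_n$. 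Once this invariant is secured, every other step of the argument is immediate from the monotonicity of $f$ and from (\ref{sigma}).
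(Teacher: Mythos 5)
Your proposal is correct and follows essentially the same inductive strategy as the paper: both iterate the super-fixed-point inequality $\sigma_n \geqslant f(\sigma_n)$ from (\ref{sigma}) together with the monotonicity of the iteration map to show $b_k < b_{k+1} \leqslant \sigma_n$ and then invoke the monotone convergence theorem. Your explicit verification that $\|A\|_F^2-(n-1)l_0^2-b_k^2>0$, via $\|A\|_F^2\geqslant n\sigma_n^2$ and $l_0<\sigma_n$, is a worthwhile addition of rigor that the paper leaves implicit.
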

	\begin{proof}
		We show by induction on $k$ that
		\[
		\sigma_n\geqslant b_{k+1}>b_k>0
		\]
		By (\ref{sigma}), we have
		\[
		\begin{aligned}
			\sigma_n&\geqslant \left(l_0^2+|\det(l_0^2 I_n-A^HA)|\left(\frac{n-1}{\|A\|_F^2-\sigma_n^2-(n-1) l_0^2}\right)^{n-1}\right)^{1/2}\\
			&\geqslant \left(l_0^2+|\det(l_0^2 I_n-A^HA)|\left(\frac{n-1}{\|A\|_F^2-(n-1) l_0^2}\right)^{n-1}\right)^{1/2}=b_1
		\end{aligned}
		\]
		so $\sigma_n\geqslant b_1$, then
		\[
		\begin{aligned}
			\sigma_n&\geqslant \left(l_0^2+|\det(l_0^2 I_n-A^HA)|\left(\frac{n-1}{\|A\|_F^2-\sigma_n^2-(n-1) l_0^2}\right)^{n-1}\right)^{1/2}\\
			&\geqslant \left(l_0^2+|\det(l_0^2 I_n-A^HA)|\left(\frac{n-1}{\|A\|_F^2-(n-1) l_0^2-b_1^2}\right)^{n-1}\right)^{1/2}=b_2\\
			&> \left(l_0^2+|\det(l_0^2 I_n-A^HA)|\left(\frac{n-1}{\|A\|_F^2-(n-1) l_0^2}\right)^{n-1}\right)^{1/2}=b_1>0
		\end{aligned}
		\]
		When $k=1$, we have
		\[
		\sigma_n\geqslant b_{2}>b_1>0
		\]
		Assume that our claim is true for $k=m$, that is $\sigma_{n } \geqslant b_{m+1} > b_{m}>0 .$ Now we consider the case when $k=m+1$. By (\ref{sigma}), we have
		\[
		\begin{aligned}
			\sigma_n&\geqslant \left(l_0^2+|\det(l_0^2 I_n-A^HA)|\left(\frac{n-1}{\|A\|_F^2-\sigma_n^2-(n-1) l_0^2}\right)^{n-1}\right)^{1/2}\\
			&\geqslant \left(l_0^2+|\det(l_0^2 I_n-A^HA)|\left(\frac{n-1}{\|A\|_F^2-b_{m+1}^2-(n-1) l_0^2}\right)^{n-1}\right)^{1/2}=b_{m+2}\\
			&> \left(l_0^2+|\det(l_0^2 I_n-A^HA)|\left(\frac{n-1}{\|A\|_F^2-b_m^2-(n-1) l_0^2}\right)^{n-1}\right)^{1/2}=b_{m+1}>0
		\end{aligned}
		\]
		Hence $\sigma_n\geqslant b_{m+2}>b_{m+1}>0$. This proves $\sigma_n\geqslant b_{k+1}>b_{k}>0,k=1,2,\cdots$. By the well known monotone convergence theorem, $\lim_{k\to \infty}b_k$ exists.
	\end{proof}
	
	\begin{thm}\label{thm3}
		Let $b=\lim _{k \rightarrow \infty} b_{k}$,
		\[
		f(x)=\left(l_0^2+|\det(l_0^2 I_n-A^HA)|\left(\frac{n-1}{\|A\|_F^2-x^2-(n-1) l_0^2}\right)^{n-1}\right)^{1/2}
		\]then $b$ is the smallest positive solution to the equation $x=f(x)$,and $\sigma_n\geqslant b$.
	\end{thm}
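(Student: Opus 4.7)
The plan is to decompose the conclusion into three claims: (i) the limit $b$ is a fixed point of $f$, i.e.\ $b=f(b)$; (ii) $\sigma_n\geqslant b$; and (iii) $b$ is the smallest positive fixed point of $f$. Claims (i) and (ii) are passages to the limit, while (iii) is the substantive step and relies on a monotone fixed-point argument.

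For (i), I would pass to the limit $k\to\infty$ in the recursion $b_{k+1}=f(b_k)$ supplied by Theorem~\ref{thm2}. This only requires that $f$ be continuous at $b$, which in turn reduces to verifying that the denominator $\|A\|_F^2-x^2-(n-1)l_0^2$ appearing inside $f$ is strictly positive at $x=b$. Since $b\leqslant\sigma_n$ by Theorem~\ref{thm2} and $l_0<\sigma_n$ by Lemma~\ref{lem1}, one has $(n-1)l_0^2<\sum_{i=1}^{n-1}\sigma_i^2=\|A\|_F^2-\sigma_n^2$, so $b^2+(n-1)l_0^2\leqslant\sigma_n^2+(n-1)l_0^2<\|A\|_F^2$ and $f$ is continuous at $b$. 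Claim (ii) is then immediate from passing to the limit in $\sigma_n\geqslant b_k$.

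For (iii), the structural fact I would establish first is that $f$ is strictly increasing on the admissible interval $I=\bigl[0,\sqrt{\|A\|_F^2-(n-1)l_0^2}\,\bigr)$: as $x^2$ grows the denominator inside the parenthesis strictly shrinks, so $f(x)$ strictly grows. Strictness uses $|\det(l_0^2 I_n-A^HA)|>0$, which holds because $l_0<\sigma_n\leqslant\sigma_i$ for all $i$, so $l_0^2$ is not an eigenvalue of $A^HA$. A direct inspection of the formulas in Theorem~\ref{thm2} also gives $b_1=f(0)$. Now let $c\in I$ be any positive solution of $x=f(x)$. Monotonicity of $f$ together with $c\geqslant 0$ yields $c=f(c)\geqslant f(0)=b_1$; and if $c\geqslant b_k$ then $c=f(c)\geqslant f(b_k)=b_{k+1}$, so by induction $c\geqslant b_k$ for every $k$ and therefore $c\geqslant b$. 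Hence $b$ is the smallest positive fixed point.

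The hard part is not really any single computation but the bookkeeping: one must verify throughout that $0$, each iterate $b_k$, the limit $b$, and any competing fixed point $c$ all lie in the domain $I$ where $f$ is defined, continuous, and strictly increasing. Every such check reduces to the inequality $x^2+(n-1)l_0^2<\|A\|_F^2$, which is guaranteed by $x\leqslant\sigma_n$ together with the strict bound $l_0<\sigma_n$ from Lemma~\ref{lem1}.
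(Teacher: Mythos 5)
Your proof follows essentially the same route as the paper: an inductive comparison of the iterates $b_k$ with the smallest positive fixed point of $f$, driven by the monotonicity of $f$ and closed off by passing to the limit. You are in fact more careful than the paper, which simply asserts that the limit $b$ is a positive solution of $x=f(x)$ without justification; you supply the needed continuity-at-$b$ argument together with the domain check $b^2+(n-1)l_0^2<\|A\|_F^2$, and you make explicit both the strict monotonicity of $f$ and the nonvanishing of $\det(l_0^2 I_n-A^HA)$, both of which the paper uses only silently.
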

	\begin{proof}
		Let $x_0$ is the smallest positive solution to the equation $x=f(x)$, we show by induction on $k$ that $x_0>b_k,k=1,2,\cdots$. When $k=1$
		\[
		\begin{aligned}
			x_0&=\left(l_0^2+|\det(l_0^2 I_n-A^HA)|\left(\frac{n-1}{\|A\|_F^2-x_0^2-(n-1) l_0^2}\right)^{n-1}\right)^{1/2}\\
			&>\left(l_0^2+|\det(l_0^2 I_n-A^HA)|\left(\frac{n-1}{\|A\|_F^2-(n-1) l_0^2}\right)^{n-1}\right)^{1/2}=b_1\\
		\end{aligned}
		\]
		Assume that our claim is true for $k = m$, that is $\sigma_n>b_m$. Now we consider the case when $k = m + 1$. 
		\[
		\begin{aligned}
			x_0&=\left(l_0^2+|\det(l_0^2 I_n-A^HA)|\left(\frac{n-1}{\|A\|_F^2-x_0^2-(n-1) l_0^2}\right)^{n-1}\right)^{1/2}\\
			&>\left(l_0^2+|\det(l_0^2 I_n-A^HA)|\left(\frac{n-1}{\|A\|_F^2-b_m^2-(n-1) l_0^2}\right)^{n-1}\right)^{1/2}=b_{m+1}\\
		\end{aligned}
		\]
		Hence $x_0>b_{m+1}$. This proves $x_0>b_k,k=1,2,\cdots$. Since $b$ is a positive solution to the equation $x=f(x)$ and $x_0>b_k,k=1,2,\cdots$, then $b=x_0$. Therefore $b$ is the smallest positive solution to the equation $x=f(x)$ and $\sigma_n\geqslant b$.
	\end{proof}
	Therefore we obtain  two new lower bounds $l_1$ and $b$ for the smallest singular value of nonsingular matrices.
	
	\section{Numerical examples}
	We use Examples \ref{ex1} and Example \ref{ex2} to compare the values of $l, l_0, l_1$.
	\begin{ex}\label{ex1}
		Let
		\[
		A=\left[\begin{array}{ccc}
			4 & -4 & -3 \\
			3 & 4 & 2 \\
			4 & 1 & 0
		\end{array}\right]
		\]
	\end{ex}
	Then $\sigma_{\min }=0.0231$, and
	\[
	l=0.0229885
	\]
	\[
	l_0=0.0229886
	\]
	Our result:
	\[
	l_1=0.0230691
	\]
	
	\begin{ex}\label{ex2}
		Let
		\[
		A=\left[\begin{array}{ccc}
			4 & 0 & 0 \\
			-1 & 5 & 0 \\
			0 & 5 & 4
		\end{array}\right]
		\]
	\end{ex}
	Then
	\[
	l=1.92771
	\]
	\[
	l_0=2.01806
	\]
	Our result:
	\[
	l_1=2.31515
	\]
	
	Next we use the following example to compare the values of $a,b,l_1$.
	\begin{ex}
		Let
		\[
		A=\left[\begin{array}{lll}
			3 & 2 & 0 \\
			1 & 9 & 5 \\
			0 & 5 & 7
		\end{array}\right]
		\]
	\end{ex}
	Then
	\[
	a=1.0367
	\]
	Our result:
	\[
	l_1=1.3434
	\]
	\[
	b=1.3455
	\]
	%% The Appendices part is started with the command \appendix;
	%% appendix sections are then done as normal sections
	
	\section{Acknowledgments}
	I would like to express my sincere gratitude to professor Xiao-Wu Chen from University of Science and Technology of China.
	%% If you have bibdatabase file and want bibtex to generate the
	%% bibitems, please use
	%%
	\bibliographystyle{elsarticle-num} 
	\bibliography{cas-refs}
	
	%% else use the following coding to input the bibitems directly in the
	%% TeX file.
	
	% \begin{thebibliography}{00}
	
	% %% \bibitem{label}
	% %% Text of bibliographic item
	
	% \bibitem{}
	
	% \end{thebibliography}
\end{document}